\newtheorem{theorem}{Theorem}[section]
\newtheorem{proposition}{Proposition}[section]
\newtheorem{lemma}{Lemma}[section]
\theoremstyle{definition}
{}
\theoremstyle{remark} 
\newtheorem{remark}{Remark}[section]
\newcommand{\del}{\partial}
\newcommand{\delb}{\bar{\partial}}
\newcommand{\cF}{{\mathcal{F}}}
\newcommand{\pbp}{\partial \bar{\partial}}
\numberwithin{equation}{section}
\begin{document}
\title{The Demailly system for a direct sum of ample line bundles on Riemann surfaces}
\author{Vamsi Pritham Pingali}
\address{Department of Mathematics, Indian Institute of Science, Bangalore 560012, India}
\email{vamsipingali@iisc.ac.in}
\begin{abstract} 
We prove that a system of equations introduced by Demailly (to attack a conjecture of Griffiths) has a smooth solution for a direct sum of ample line bundles on a Riemann surface. We also reduce the problem for general vector bundles to an a priori estimate using Leray-Schauder degree theory.
\end{abstract}
\maketitle
\section{Introduction}\label{sec:intro}
A holomorphic vector bundle $E$ is said to be Hartshorne ample if $\mathcal{O}_{E^*}(1)$ is an ample line bundle over $\mathbb{P}(E^*)$. There is no unique differentio-geometric notion of positivity of curvature $\Theta$ of a smooth Hermitian metric $h$. The most natural of these notions are \emph{Griffiths positivity} ($\langle v, \sqrt{-1}\Theta v\rangle$ is a K\"ahler form for all $v\neq 0$), \emph{Nakano positivity} (the bilinear form defined by $\sqrt{-1}\Theta$ on $T^{1,0}M \otimes E$ is positive-definite), and \emph{dual-Nakano positivity} (the Hermitian holomorphic bundle $(E^*,h^*)$ is Nakano \emph{negative}). Nakano positivity and dual-Nakano positivity imply Griffiths positivity and all three of them imply Hartshorne ampleness. A famous conjecture of Griffiths \cite{Griffiths} asks whether Hartshorne ample vector bundles  admit Griffiths positively curved metrics. This conjecture is still open. However, a considerable amount of work has been done to provide evidence in its favour \cite{Bo, Camp, dem2, Liu,  MT, Pinchern, Um}.\\
\indent Relatively recently, Demailly \cite{dem2} proposed a programme to prove the aformentioned conjecture of Griffiths (in fact, this approach aims at proving dual Nakano positivity, if it works). Demailly's approach involves solving a family (depending on a parameter $0\leq t \leq 1$) of PDE that we call the Demailly system. Unfortunately\footnote{The author thanks J.-P. Demailly for this observation.}, the cotangent bundle of a compact ball quotient is ample but does not admit dual Nakano positively curved metrics. Thus, Demailly's approach cannot work in general. Nonetheless, the aforementioned counterexample does admit a dual Nakano \emph{semi-positively} curved metric. It is an interesting question to know if the Demailly system can be solved even in higher dimensions on $[0,1)$ (and perhaps blows up at $t=1$). Even if the maximal time of existence is not $t=1$, it might still be related to interesting numerical conditions on the bundle. We aim to provide a proof-of-concept for Demailly's approach by studying the simplest non-trivial case of a direct sum of ample line bundles over a compact Riemann surface $M$. We also reduce the vector bundle case on Riemann surfaces to an a priori estimate. \\
\indent Let $h_t=e^{-f_t}g_th_0$ where $\det(g_t)=1$ and $g_t>0$. In this setting, the Demailly system boils down to the following set of equations. 
\begin{gather}
\det\left(\frac{\sqrt{-1}F}{\omega_0}+(1-t)\alpha_0\right)=e^{\lambda f}a_t\nonumber \\
\sqrt{-1}F-\frac{1}{r}\mathrm{tr}(\sqrt{-1}F) =-e^{\mu f} \ln g\omega_0.
\label{eq:mainsystem}
\end{gather}
We choose $\mu=1$. Substituting $h=e^{-f} gh_0$, $F_0^0=F_0-\frac{1}{r}\omega_0$, $\Delta f = \frac{\sqrt{-1}\pbp f}{\omega_0}$ we get
\begin{gather}
\det\left(\Delta f+\frac{1}{r}-e^f \ln g +(1-t)\alpha_0\right)=e^{\lambda f}a_t\nonumber \\
\sqrt{-1}F_0^0+\sqrt{-1}\bar{\partial}(\partial g g^{-1}) =-e^{f} \ln g\omega_0.
\label{eq:mainsysteminfandg}
\end{gather}
Suppose $E=\displaystyle \oplus_{i=1}^r L_i$ where $L_i$ are holomorphic line bundles and $h_0$ is a direct sum of metrics, we can attempt to solve \ref{eq:mainsysteminfandg} using a direct sum of metrics. Thus, $\ln g = u_1 \oplus u_2 \ldots$ where $u_i$ are smooth functions satisfying 
\begin{gather}
\displaystyle \sum_i u_i=0 \nonumber \\
\det\left(\Delta f+\frac{1}{r}-e^f \ln g +(1-t)\alpha_0\right)=e^{\lambda f}a_t\nonumber \\
\sqrt{-1}(F_0^0)_i+\sqrt{-1}\delb \del u_i =-e^{f} u_i\omega_0.
\label{eq:directsummainsysteminfandg}
\end{gather}
In this paper we prove the following main result.
\begin{theorem}
If $L_i$ are ample bundles, then the system \ref{eq:mainsysteminfandg} has a smooth Griffiths-positively curved solution.
\label{thm:main1}
\end{theorem}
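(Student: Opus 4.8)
The plan is a continuity method in Demailly's parameter $t$, made feasible by a reduction that is special to the split case. Fix a smooth function $f$; dividing by $\omega_0$, each equation $\sqrt{-1}(F_0^0)_i+\sqrt{-1}\delb\del u_i=-e^fu_i\omega_0$ becomes the \emph{linear} equation $\Delta u_i-e^fu_i=\beta_i$, where $\beta_i=\frac{\sqrt{-1}(F_0^0)_i}{\omega_0}$ and $\Delta f=\frac{\sqrt{-1}\pbp f}{\omega_0}$ as in the text. Since $e^f>0$ on the compact surface $M$, the operator $\Delta-e^f$ is a negative-definite isomorphism of Hölder spaces, so there is a unique $u_i=u_i(f)$, depending real-analytically on $f$ and gaining two derivatives; and since $F_0^0$ is trace-free, $\sum_i\beta_i=0$, so $\sum_iu_i(f)=0$ automatically and the constraint in \eqref{eq:directsummainsysteminfandg} drops out. (This linearity is precisely what fails for a non-diagonal $g$, which is why the general vector bundle case is only reduced to an estimate.) Substituting $u_i(f)$ into the determinant equation and writing it diagonally, $\prod_{i=1}^r\bigl(\Delta f+c_i\bigr)=e^{\lambda f}a_t$ with $c_i=c_i(x,t,f,u_i(f))=\tfrac1r-e^fu_i(f)+(1-t)\alpha_{0,i}$, the whole system reduces to a single scalar equation for $f$. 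Since $s\mapsto\sum_i\ln(s+c_i)$ is a smooth increasing bijection of $(-\min_ic_i,\infty)$ onto $\R$, on the positivity cone $\{\Delta f+c_i>0 \text{ for all } i\}$ this equation can be solved for the Laplacian, $\Delta f=H\bigl(x,t,f,u_1(f),\dots,u_r(f)\bigr)$ with $H$ smooth --- a semilinear elliptic equation in which the $u_i$ enter only through zeroth-order (nonlocal) terms. At $t=0$, $f\equiv0$ and $g\equiv I$ solve the system once $a_0$ is defined to be the resulting value of $\prod_ic_i$, which is positive because each ample $L_i$ carries a positively curved reference metric $h_{0,i}$; hence $0\in T$, where $T$ is the set of $t$ for which \eqref{eq:directsummainsysteminfandg} admits a smooth solution with $\Delta f+c_i>0$ for every $i$ (equivalently, with $\sqrt{-1}\Theta>0$).

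For openness of $T$, linearise at a solution. Linearising $\Delta u_i-e^fu_i=\beta_i$ gives $\Delta\dot u_i-e^f\dot u_i=e^fu_i\dot f$, and feeding this into the linearisation of the determinant equation collapses it neatly to $\sum_i\frac{\Delta(\dot f-\dot u_i)}{\Delta f+c_i}=\lambda\dot f$. The resulting operator in $\dot f$ is a second-order elliptic operator plus lower-order terms plus a compact operator (the map $\dot f\mapsto\dot u_i$ is compact), hence Fredholm of index $0$; it has trivial kernel by the maximum principle or a standard integration by parts, using the strict positivity of the factors $\Delta f+c_i$ and the sign of $\lambda$ (equivalently, the favourable sign of the zeroth-order coefficient in $\Delta f=H(\cdots)$, which one arranges by choosing $\lambda$ suitably). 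The implicit function theorem then propagates solvability in $t$. (Should the linearisation sit at resonance, one argues instead by Leray--Schauder degree, as the abstract does for arbitrary vector bundles.)

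Closedness --- a priori estimates uniform in $t\in[0,1]$ --- is the heart of the matter, and I expect it to be the main obstacle. The key is a two-sided $C^0$ bound on $f$ together with a $C^0$ bound on the $u_i$, which are genuinely coupled. One runs the maximum principle on $\prod_i(\Delta f+c_i)=e^{\lambda f}a_t$ at the extrema of $f$; at those points the $u_i$-dependence of the $c_i$ is controlled by the maximum principle for $\Delta u_i-e^fu_i=\beta_i$ (which bounds $\|u_i\|_{C^0}$ by $\|\beta_i\|_{C^0}\sup_M e^{-f}$, so that $e^{f_{\min}}|u_i|$ is bounded outright), and, for the remaining one-sided estimate, by the integral identity $\int_Me^fu_i\,\omega_0=-\int_M\beta_i\,\omega_0=-2\pi(\deg L_i-\tfrac1r\deg E)$, by Jensen's inequality applied to the concavity of $\ln$ in $\sum_i\ln(\Delta f+c_i)=\lambda f+\ln a_t$ (which bounds $\int_Mf\,\omega_0$), and by a Moser-type iteration passing from mean to supremum; this is where ampleness (through $\deg L_i$ and the choice of $h_0$, hence of $\alpha_0$) and the one-dimensionality of $M$ are used essentially. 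Once $\|f\|_{C^0}$ and $\|u_i\|_{C^0}$ are bounded, the product equation pinches each $\Delta f+c_i$ between two positive constants --- in particular the solution stays in the positivity cone and cannot degenerate --- so $\|\Delta f\|_{C^0}$ is bounded, and elliptic $L^p$ and Schauder estimates applied in turn to $\Delta f=H(x,t,f,u_i(f))$ and to $\Delta u_i-e^fu_i=\beta_i$ bootstrap to uniform $C^{k,\alpha}$ bounds for every $k$. Hence $T$ is closed, $T=[0,1]$, and a diagonal solution of \eqref{eq:directsummainsysteminfandg} yields a solution of \eqref{eq:mainsysteminfandg} with diagonal --- hence Griffiths- (indeed dual-Nakano-) positive --- curvature, which proves Theorem \ref{thm:main1}.
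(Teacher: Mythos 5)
Your overall architecture (reduce the diagonal system to a scalar equation for $f$ with the $u_i$ determined linearly from $f$, then run a deformation in $t$) matches the spirit of the paper, but there are two genuine gaps and one error in the details. First, the error: at $t=0$ you cannot take $g\equiv I$ (all $u_i=0$), since the second equation then forces $(F_0^0)_i=0$, i.e.\ $h_0$ projectively flat. The paper instead solves $\sqrt{-1}(F_0^0)_i+\sqrt{-1}\delb\del u_i=-u_i\omega_0$ for a nontrivial $g_0$ at $t=0$ and only then chooses $\alpha_0$ large and $a_0=\det(\tfrac1r+\alpha_0-\ln g_0)$; this is fixable, but your positivity of $a_0$ "because $L_i$ is ample" is not how the constant is produced (ampleness is not assumed to give a positively curved $h_{0,i}$ anywhere in the argument).

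The first serious gap is openness at general $t$. The paper states explicitly that openness away from $t=0$ is elusive (possibly false), which is precisely why it abandons the continuity method for Leray--Schauder degree. Your claim that the linearised operator has trivial kernel "by the maximum principle or a standard integration by parts, using \dots the sign of $\lambda$" does not survive scrutiny: the zeroth-order coefficient contains $-e^fu_i$ with $u_i$ of indefinite sign and size not controlled relative to $\lambda$ at a general solution, and the nonlocal term $\dot f\mapsto\dot u_i$ destroys the maximum principle; the integration-by-parts argument in the paper (Proposition \ref{prop:Opennessat0}) only closes because it is performed at the specific reference solution $(0,g_0)$ with $\lambda,\alpha_0$ chosen large relative to $g_0$. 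Your parenthetical fallback to degree theory is the correct move, but as stated it is empty: to make the degree well-defined and nonzero you need (i) the uniform a priori estimates keeping all solutions strictly inside a bounded open set for all $t\in[0,1]$, and (ii) a count of solutions at $t=0$, i.e.\ the global uniqueness statement of Proposition \ref{prop:Uniquenessat0}, neither of which you supply. The second serious gap is the uniform lower bound on $f$, which you correctly identify as the heart of the matter but only gesture at. The paper's mechanism is quite specific: a pointwise bound $\Vert u_ie^f\Vert_{C^0}\leq C$ obtained by applying the Green representation to $\Delta(u_i\pm Cf)$ (Proposition \ref{prop:fundamentalestimateinthedirectsumcase}), followed by a blow-up argument in which $f\to-\infty$ uniformly forces, in the limit, $\int C_{i,\infty}e^{f_\infty}\geq \frac{c_1(E)}{r}$ for the extremal index $i$, contradicting $\int u_ie^f=\frac{c_1(E)}{r}-c_1(L_i)<\frac{c_1(E)}{r}$ from ampleness (Proposition \ref{prop:Apriori}). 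Your "Jensen plus Moser iteration" sketch does not produce the lower bound on $f$ (Jensen applied to the determinant equation gives information at points where $\Delta f$ has a sign, not a bound on $\fint f$ from below), so as written the closedness step does not close.
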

In the proof of this theorem, we do not assume at the outset that $L_i$ admit positively curved metrics. Moreover, as discussed in Remark \ref{rem:nontrivialityevenassumingpositivity}, even if we make such an assumption, it is not immediately clear whether the proof can be simplified significantly. \\
\indent Along the way, we also reduce the problem for general vector bundles to an a priori estimate.
\begin{theorem}
Suppose any $C^{2,\gamma}$ solution $(f,g)$ of \ref{eq:mainsysteminfandg} satisfies $f\geq -C$, where $C$ is independent of $0\leq t\leq 1$. Then there exists a smooth Griffiths-positively curved solution to \ref{eq:mainsysteminfandg}.
\label{thm:main2}
\end{theorem}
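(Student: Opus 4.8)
The strategy is a Leray--Schauder degree argument for the coupled family \ref{eq:mainsysteminfandg}, parametrised by $t\in[0,1]$: the degree is a homotopy invariant as long as every solution stays inside a fixed admissible region, and the hypothesised bound $f\geq -C$ is exactly the input needed to confine them there. I would work on the Banach manifold of pairs $(f,g)$ with $f\in C^{2,\gamma}(M)$ normalised so that $\int_M f\,\omega_0$ has the value forced by integrating the first equation, and $g$ a positive Hermitian $C^{2,\gamma}$ endomorphism of $E$ with $\det g=1$; after inverting the leading elliptic parts of the two equations the system becomes $(f,g)=\Psi_t(f,g)$ with $\Psi_t$ compact, so $\deg(\mathrm{Id}-\Psi_t,\Omega,0)$ is well defined on any open admissible $\Omega$ free of boundary solutions.

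The first substantive step is to \emph{decouple} the system by solving the second equation for $g$ in terms of $f$. With the paper's sign conventions, rewriting $\sqrt{-1}\bar{\partial}(\partial g\,g^{-1})$ shows the second equation reads, at the linearised level, $(\Delta-e^{f})(\ln g)=\sqrt{-1}(F_0^0)/\omega_0$, whose zeroth-order term $-e^{f}$ has the coercive sign; hence for each $f$ there is a \emph{unique} positive Hermitian $g=g(f)$ with $\det g=1$ solving it, obtainable by a continuity/heat-flow argument, satisfying $\|g(f)\|_{C^{2,\gamma}}+\|g(f)^{-1}\|_{C^{0}}\leq C(\|f\|_{C^{0,\gamma}})$, with $f\mapsto g(f)$ smooth by the implicit function theorem. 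Substituting collapses \ref{eq:mainsysteminfandg} to the single scalar equation $\det\big((\Delta f+\tfrac1r)I-e^{f}\ln g(f)+(1-t)\alpha_0\big)=e^{\lambda f}a_t$. One must also keep $H_t:=(\Delta f+\tfrac1r)I-e^{f}\ln g(f)+(1-t)\alpha_0$ positive definite; this is built into the choice of $\Omega$, and on the part of $\partial\Omega$ where positivity just fails $H_t$ is positive \emph{semi}definite with nontrivial kernel, so $\det H_t=0\neq e^{\lambda f}a_t$ and there is no solution there. The norm-bounded part of $\partial\Omega$ is handled by the estimates below, so $\deg(\mathrm{Id}-\Psi_t,\Omega,0)$ is defined and constant in $t$.

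Next I would establish the a priori estimates, uniformly in $t$, for admissible ($H_t>0$) solutions. The lower bound $f\geq -C$ is the hypothesis. For the upper bound, concavity of $\log\det$ gives $\log\det H_t\leq r\log(\Delta f+\tfrac1r+\tfrac{1-t}{r}\Tr\alpha_0)$, so at a maximum point of $f$, where $\Delta f\leq 0$, the first equation forces $\lambda f\leq r\log(\tfrac1r+\tfrac1r\Tr\alpha_0)-\log a_t$, i.e.\ $f\leq C$ once $\lambda>0$. With $f$ two-sidedly bounded, the estimate on $g(f)$ becomes genuinely uniform, so $H_t=(\Delta f)I+B(x)$ with $B$ uniformly bounded; positivity of $H_t$ bounds $\Delta f$ below, while $e^{\lambda f}a_t=\det((\Delta f)I+B)$ together with the boundedness of $f$ and of the eigenvalues of $B$ bounds $\Delta f$ above. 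Hence $\|\Delta f\|_{L^{\infty}}\leq C$, so $f\in C^{1,\gamma}$; and because the matrix enters only through the \emph{scalar} $\Delta f$, the equation recovers $\Delta f$ pointwise by inverting the increasing map $\Delta f\mapsto\log\det((\Delta f)I+B(x))$ against $\lambda f+\log a_t\in C^{1,\gamma}$, giving $\Delta f\in C^{1,\gamma}$ and then $f\in C^{3,\gamma}$ by Schauder — an honest bootstrap needing no Evans--Krylov in this one-dimensional-base setting — after which $g(f)$ is estimated in every $C^{k,\gamma}$ as well.

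It remains to compute $\deg(\mathrm{Id}-\Psi_0,\Omega,0)$. For Demailly's natural initial data one takes $\alpha_0$ a large positive multiple of $I$ and $a_0$ the positive function making $(f,g)=(0,g(0))$ a solution of the $t=0$ system; the point of taking $\alpha_0$ large is that then $H_0^{-1}$ is small, which makes the $H_0^{-1}$-weighted zeroth-order and off-diagonal terms of the linearisation small, leaving the manifestly invertible operators $\Tr(H_0^{-1})\Delta-\lambda$ (for the scalar equation) and $\Delta-1$ (for the $g$-equation) in control; a standard rescaling homotopy then identifies the degree with that of this invertible linearisation, namely $\pm1\neq 0$. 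Homotopy invariance along $t\in[0,1]$ then yields a $C^{2,\gamma}$ — hence, by the bootstrap, $C^{\infty}$ — solution at $t=1$ lying in $\Omega$, i.e.\ with $H_1>0$, which is the asserted Griffiths-positively curved solution. I expect the main obstacles to be the two genuinely nontrivial ingredients: (a) the uniqueness, estimates and smooth dependence for the auxiliary $g$-equation, which is what legitimises the decoupling; and (b) controlling the linearisation well enough at $t=0$ to pin down the degree, where one must exploit the freedom in $\alpha_0$ and the sign of $\lambda$ rather than any soft argument, since the coupling injects an indefinite zeroth-order term. Given the hypothesised lower bound on $f$, the remaining a priori estimates are comparatively soft here.
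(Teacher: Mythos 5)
Your overall strategy --- Leray--Schauder degree in $t$, with the hypothesised bound $f\geq -C$ feeding a bootstrap that confines all solutions to a fixed admissible region --- is the same as the paper's, and your a priori estimates (upper bound on $f$ from AM--GM/concavity of $\log\det$ at a maximum point, then $\Vert\Delta f\Vert_{L^\infty}\leq C$, then Schauder via the scalar inversion $\Delta f\mapsto \log\det((\Delta f)I+B)$) track Section \ref{sec:Aprioriestimates} closely. The architectural difference --- you fully decouple by substituting $g=g(f)$ into the determinant equation, whereas the paper keeps the pair and builds the compact map componentwise, $U$ from $(f,g,t)$ via $L^{-1}$ and $V$ from $f$ via the perturbed Hermitian--Einstein equation --- is cosmetic; both require exactly the nontrivial input you flag in (a), namely existence (by a continuity path starting from Uhlenbeck--Yau at constant $\epsilon$), uniqueness (the paper cites \cite{PinDem}), and uniform estimates for the $g$-equation with non-constant weight $e^f$.

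The genuine gap is in your computation of the degree at $t=0$. Your ``rescaling homotopy to the invertible linearisation'' computes the \emph{local index} of the known solution $(0,g_0)$; it says nothing about other zeros of $\mathrm{Id}-\Psi_0$ elsewhere in $\Omega$. The Leray--Schauder degree on $\Omega$ is the sum of local indices over \emph{all} solutions at $t=0$, and without further information these could cancel, leaving degree $0$ and no conclusion at $t=1$. What is needed, and what your proposal omits, is a \emph{global} uniqueness statement for the $t=0$ system on the whole admissible region $\{\frac{\sqrt{-1}F}{\omega_0}+\alpha_0>0\}$. This is the content of Proposition \ref{prop:Uniquenessat0}, and it is not soft: the paper connects a putative second solution $(f,g)$ to $(0,g_0)$ by the path $f_s=sf$, $g_s=e^{s\ln g+(1-s)\ln g_0}$, shows via Lemma \ref{lem:aprioriestimatesforuniqueness} that the path stays in the positivity cone for $\alpha_0$ large, and then pairs the integrated derivative of $T$ along the path with a suitable test pair to obtain the coercive inequality \ref{ineq:finalinequalityforuniqueness}, forcing $f=0$, $g=g_0$ for $\lambda$ large. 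Your observation that large $\alpha_0$ makes the linearisation at $(0,g_0)$ dominated by $\Tr(H_0^{-1})\Delta-\lambda$ and $\Delta-1$ is the right local mechanism, but it must be run along the entire connecting path between two arbitrary solutions (where $H_s^{-1}$ is only controlled after the a priori estimates of Lemma \ref{lem:aprioriestimatesforuniqueness}), not just at the known solution. Until that is supplied, the degree at $t=0$ is not pinned down and the argument does not close.
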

The strategy of the proofs is as follows:
\begin{enumerate}
\item In Section \ref{sec:Opennessatt=0} we choose $\lambda, a_0>>1$ so that there is a solution at $t=0$ and for nearby $t$ (depending smoothly on $t$), and uniqueness holds for $t=0$. In this section we do everything for a general vector bundle. Unfortunately, openness at general $t$ seems elusive at present (if true at all). Therefore, the method of continuity cannot be used and we resort to Leray-Schauder degree theory.
\item In Section \ref{sec:Aprioriestimates} we prove a priori estimates on any solution of \ref{eq:directsummainsysteminfandg} independent of $t$. This step appears to be very challenging for vector bundles. In particular, we do not have the analogue of Proposition \ref{prop:fundamentalestimateinthedirectsumcase}. Even if such an estimate is granted, while it is not hard to use Uhlenbeck compactness to produce a limiting connection on a limiting bundle, the issue is whether the connection is Yang-Mills (such a result does not seem to have been proven even for the usual Hermitian-Einstein metrics for the continuity path in \cite{UY}, at least to the author's knowledge). Given such a result, it is easy to use ampleness (and the Harder-Narasimhan filtration) to produce a contradiction.
\item In Section \ref{sec:Conclusion} we set up Leray-Schauder degree theory to prove the existence of a Griffiths positively curved solution to \ref{eq:directsummainsysteminfandg}. In this section, we set up the Leray-Schauder degree theory for general vector bundles to prove Theorems \ref{thm:main1} and \ref{thm:main2}. 
\end{enumerate}
\subsection*{Acknowledgements} 
This work is partially supported by grant F.510/25/CAS-II/2018(SAP-I) from UGC (Govt. of India), and a MATRICS grant MTR/2020/000100 from SERB (Govt. of India). The author thanks Ved Datar, Jean-Pierre Demailly, Richard Wentworth, Sandeep Kunnath, and Swarnendu Sil for fruitful discussions.
\section{The Demailly system near $t=0$}\label{sec:Opennessatt=0}
\indent In this section, we do not assume the direct sum ansatz \ref{eq:directsummainsysteminfandg}. At $t=0$, choose $f_0=0$ and $g_0$ to solve the second equation of \ref{eq:mainsysteminfandg} using the results of \cite{UY}. (Note that in the case of a direct sum, standard elliptic theory enables us to choose $g_0$ to be a direct sum. By uniqueness \cite{PinDem}, this is the only solution in such a case.) Choose $\alpha_0>1$ so that $\frac{1}{r}+\alpha_0-\ln(g_0)>0$. Choose $a_t=a_0=\det(\frac{1}{r}+\alpha_0-\ln(g_0))$ for all $0\leq t \leq 1$. Note that $\alpha_0, a_0$ depend on $g_0$ but not the other way around. Moreover, $g_0, \alpha_0, a_0, f_0$ do \emph{not} depend on $\lambda$.
\begin{proposition}
If $\alpha_0, \lambda$ are large, then there exists a smooth solution $f_t, g_t$ to \ref{eq:mainsysteminfandg} on $t\in [0,t_0)$ where $t_0$ depends on $\lambda, g_0, \alpha_0$. Moreover, $f_t, g_t$ depend smoothly on $t$ and are locally unique, i.e., if $f,g$ are sufficiently close (depending only on $\lambda, g_0, \alpha_0$) to $f_t,g_t$ and solve \ref{eq:mainsysteminfandg}, then $f=f_t, g=g_t$. If $E$ and $h_0$ are direct sums as in \ref{eq:directsummainsysteminfandg}, then $g$ is a direct sum too.
\label{prop:Opennessat0}
\end{proposition}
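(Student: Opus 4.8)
The plan is to get the whole proposition out of one application of the implicit function theorem at the already known solution at $t=0$. Fix $\alpha_0$ (and hence $a_0=\det(\tfrac1r+\alpha_0-\ln g_0)$) large enough that $M_0:=\tfrac1r+\alpha_0-\ln g_0>0$, parametrize the metric by $s:=\ln g\in C^{2,\gamma}(M,\mathrm{Herm}_0(E))$, the space of $h_0$-Hermitian trace-free endomorphisms (so $g=e^{s}$ and $\ln g=s$ literally), and let $f\in C^{2,\gamma}(M,\R)$. Taking a logarithm in the first equation, \eqref{eq:mainsysteminfandg} becomes $\Phi(t,f,s)=0$ with $\Phi=(\Phi_1,\Phi_2)$,
\[
\Phi_1=\log\det\Bigl(\Delta f+\tfrac1r-e^{f}s+(1-t)\alpha_0\Bigr)-\lambda f-\log a_0,
\]
\[
\Phi_2=\sqrt{-1}F_0^0+\sqrt{-1}\delb\bigl(\del e^{s}\,e^{-s}\bigr)+e^{f}s\,\omega_0.
\]
Since $\det$, $\log$, $\exp$ are real-analytic on the relevant open sets and Hölder spaces are Banach algebras closed under composition with smooth functions, $\Phi$ is a real-analytic map from a neighbourhood of $(0,0,s_0)$ in $[0,1]\times C^{2,\gamma}\times C^{2,\gamma}$ into $C^{0,\gamma}(M,\R)\times C^{0,\gamma}(M,\mathrm{Herm}_0(E))$ (the endomorphism inside the determinant stays positive near $(0,0,s_0)$, being close to $M_0$), and $\Phi(0,0,s_0)=0$.

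The derivative $L:=D_{(f,s)}\Phi(0,0,s_0)$ has the block form $\left(\begin{smallmatrix}A&B\\C&D\end{smallmatrix}\right)$ where $A\phi=\Tr(M_0^{-1})\,\Delta\phi-(\lambda+\Tr(M_0^{-1}\ln g_0))\phi$, $B\sigma=-\Tr(M_0^{-1}\sigma)$, $C\phi=\phi\,(\ln g_0)\,\omega_0$, and $D$ is the linearization of $\Phi_2$ in $s$, a second-order self-adjoint elliptic operator on $\mathrm{Herm}_0(E)$-valued functions of the shape ``(elliptic part of the linearized Hermitian--Einstein operator)$\,+\,\mathrm{Id}$''. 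The off-diagonal blocks $B$, $C$ are of order zero, hence compact from $C^{2,\gamma}$ into $C^{0,\gamma}$, so $L$ is Fredholm of index zero as soon as $A$ and $D$ are, and it suffices to prove that $L$ is injective; the only genuinely nontrivial point is the invertibility of $D$.

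That $D$ is an isomorphism follows from the variational structure: at $f=0$ the map $s\mapsto\Phi_2$ is a gradient-type map for a strictly convex functional --- Donaldson's functional on $\{e^{s}h_0:\Tr s=0\}$, which is convex, plus an entropy-type strictly convex term whose Hessian contributes exactly the ``$+\,\mathrm{Id}$'' (because $\ln g=s$ on the nose) --- so $D$ is positive and self-adjoint; in particular the added $\mathrm{Id}$ is precisely what removes the kernel of the linearized Hermitian--Einstein operator, namely the $\nabla$-parallel trace-free endomorphisms, which is a nonzero space exactly when $E$ splits. Having fixed $\alpha_0$, the operators $B$, $C$, $D$ and the positive coefficient $\Tr(M_0^{-1})$ do not depend on $\lambda$, and $D$ is invertible between the relevant Sobolev/Hölder spaces. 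Now suppose $L(\phi,\sigma)=0$: the second equation gives $\sigma=-D^{-1}C\phi$, and substituting into the first gives $(A-BD^{-1}C)\phi=0$. Pairing with $\phi$ in $L^2(\omega_0)$, integrating $\Tr(M_0^{-1})\Delta\phi$ by parts (the lower-order term produced thereby being absorbed into the good term by a weighted Cauchy--Schwarz inequality), and using that $\|BD^{-1}C\|_{L^2\to L^2}$ is finite with a bound independent of $\lambda$, one obtains $0\le(C_0-\lambda)\|\phi\|_{L^2}^2$ with $C_0$ independent of $\lambda$; hence for $\lambda$ large $\phi=0$ and then $\sigma=0$. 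Thus $L$ is an isomorphism. (The same estimate, together with the fact that $A$ is invertible by the maximum principle once $\lambda+\Tr(M_0^{-1}\ln g_0)>0$, confirms the Fredholm facts used above.)

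With $L$ an isomorphism, the implicit function theorem produces $t_0>0$ (depending on $\lambda,g_0,\alpha_0$) and a real-analytic curve $t\mapsto(f_t,s_t)$ on $[0,t_0)$ solving $\Phi(t,f_t,s_t)=0$ and extending the $t=0$ solution; it is the unique solution of \eqref{eq:mainsysteminfandg} in a fixed $C^{2,\gamma}$-neighbourhood of $(0,g_0)$, which is the asserted local uniqueness, and a standard elliptic bootstrap upgrades any $C^{2,\gamma}$ solution to a smooth one. In the direct sum situation \eqref{eq:directsummainsysteminfandg}, restrict every Banach space to the closed subspace on which $f$ is scalar and $s=\bigoplus_i u_i$ with $\sum_i u_i=0$; $\Phi$ preserves this subspace, $A$ is unchanged, $B$ and $C$ act diagonally, and $D$ restricts to $\bigoplus_i(-\Delta+1)$ on $\{\sum_i u_i=0\}$, which is manifestly invertible, so the same argument produces a diagonal solution on a possibly smaller interval; by the local uniqueness just established it coincides with $(f_t,g_t)$, so $g_t$ is a direct sum. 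The main obstacle in this scheme is the invertibility of $D$ in the general, non-split case --- i.e.\ pinning down the sign of the linearized Hermitian--Einstein operator and verifying that the zeroth-order term exactly kills its (possibly nontrivial) kernel; the bookkeeping in $\alpha_0$ and $\lambda$, the Fredholm theory, and the regularity are routine.
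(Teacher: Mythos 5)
Your strategy is the same as the paper's: perturb off the known $t=0$ solution $(f,g)=(0,g_0)$ via the implicit function theorem, with invertibility of the linearization coming from (i) positivity of the linearized Hermitian--Einstein operator plus the zeroth-order term contributed by $e^{f}\ln g$, and (ii) taking $\lambda$ large to dominate the coupling between the two equations. The organization differs: the paper does not split into blocks but tests the full linearized system against the weighted pair $\bigl(\mathrm{tr}(M_0^{-1})^{-1}\delta f\,\omega_0,\ \delta g'\bigr)$ (its choice of weight makes $\mathrm{tr}(M_0^{-1}\Delta\delta f)/\mathrm{tr}(M_0^{-1})=\Delta\delta f$, so no gradient of the coefficient appears), and then obtains coercivity of the single quadratic form via the GM--HM inequality and Cauchy's inequality; you instead invert the $g$-block $D$ first and run a Schur-complement estimate on $A-BD^{-1}C$, which is equally valid and arguably cleaner bookkeeping. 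One point you should make precise: the positivity of $D$, which you correctly flag as the only nontrivial step, is exactly what the paper extracts from Lemma 2.1 of Uhlenbeck--Yau. The paper parametrizes by $g'=gg_0^{-1}$ rather than $s=\ln g$, and the price of either parametrization is that one of the two terms in $D$ involves the derivative of $\exp$ or $\ln$ at $g_0\neq I$ (in your coordinates the second-order term is $\sqrt{-1}\delb\bigl(D_s(\del e^{s}e^{-s})[\sigma]\bigr)$, not literally $\sqrt{-1}\delb\del_0\sigma$), so the pointwise inequalities $C\,\mathrm{tr}((\delta g')^{\dag}\delta\ln(g'g_0))\geq\mathrm{tr}((\delta g')^{\dag}\delta g')\geq\mathrm{tr}((\delta\ln(g'g_0))^2)$ from [UY, Lemma 2.1] are what turn the ``convexity of Donaldson's functional plus an identity term'' heuristic into the needed coercivity $\langle D\sigma,\sigma\rangle\geq\frac{1}{C}\Vert\sigma\Vert_{L^2}^2$; with that citation supplied, your argument closes, and your treatment of the direct-sum case (restrict to the diagonal subspace and invoke local uniqueness) matches the paper's ``the same arguments go through'' remark.
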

\begin{proof}
Consider the map $T(f,g',t)=(T_1(f,g',t),T_2(f,g',t))$ where $0\leq t \leq 1$, $f,g'$ are smooth and satisfy $\det(g')=1, g'g_0>0$, $\Delta f+\frac{1}{r}-e^f \ln (g'g_0) +(1-t)\alpha_0>0$,  and
\begin{gather}
T_1(f,g',t)=\ln\left(\det\left(\Delta f+\frac{1}{r}-e^f \ln (g'g_0) +(1-t)\alpha_0\right)\right)-\lambda f-\ln(a_0) \nonumber \\
T_2(f,g',t)=\sqrt{-1}F_0^0+\sqrt{-1}\bar{\partial}(\partial (g'g_0) g_0^{-1}g'^{-1})+e^{f} \ln (g' g_0)\omega_0.
\label{eq:defofT}
\end{gather}
Let $0<\gamma<1$. The map $T$ extends to a Banach submanifold of $C^{2,\gamma} \times C^{2,\gamma} \times [0,1]$. We shall prove that $D_{f,g'}T(0,I,0): C^{2,\gamma}\times C_0^{2,\gamma} \rightarrow C^{0,\gamma}\times C^{0,\gamma}$ (where $C_0^{2,\gamma}$ consists of $C^{2,\gamma}$ endomorphisms $\delta g'$ such that $\mathrm{tr}(\delta g')=0$) is an isomorphism. Indeed, 
\begin{gather}
 D_{f,g'}T_1(0,I,0) [\delta f, \delta g']= \mathrm{tr}\left(\Big(\frac{1}{r}-\ln g_0+\alpha_0\Big )^{-1} (\Delta \delta f-\delta f \ln g_0 - \delta \ln (g' g_0)) \right)-\lambda\delta f\nonumber \\
\langle D_{f,g'}T_2(0,I,0) [\delta f, \delta g'] \rangle=\sqrt{-1}\bar{\partial} \partial_0 \delta g' +\delta f\ln g_0 \omega_0 +\delta \ln (g' g_0)\omega_0.
\label{eq:linearisationfirstequation}
\end{gather}
At this juncture, consider
\begin{gather}
\left\langle \left [DT[\delta f, \delta g'], \Big(\mathrm{tr}\Big(-\frac{1}{r}+\ln g_0-\alpha_0\Big )^{-1}\Big)^{-1}\delta f \omega_0, \delta g'\right ]\right\rangle \nonumber \\ = \displaystyle \int_M \Bigg( \vert \nabla \delta f \vert^2
+\vert \partial_0 \delta g' \vert^2 +\left(\frac{\lambda}{\mathrm{tr}\Big(\frac{1}{r}-\ln g_0+\alpha_0\Big )^{-1}}+\frac{\mathrm{tr}\left(\Big(\frac{1}{r}-\ln g_0+\alpha_0\Big )^{-1}\ln g_0\right)}{\mathrm{tr}\Big(\frac{1}{r}-\ln g_0+\alpha_0\Big )^{-1}} \right)\delta f^2 \nonumber \\
+ \mathrm{tr}\left((\delta g')^{\dag} \delta \ln(g'g_0)\right)+\delta f\frac{\mathrm{tr}\left(\Big(\frac{1}{r}-\ln g_0+\alpha_0\Big )^{-1}\delta\ln (g'g_0)\right)}{\mathrm{tr}\Big(\frac{1}{r}-\ln g_0+\alpha_0\Big )^{-1}}+\delta f \mathrm{tr}((\delta g')^{\dag} \ln(g_0))\Bigg ).
\label{eq:symmetricfunctional}
\end{gather}
By the Geometric-Mean-Harmonic-Mean inequality, for sufficiently large $\lambda, \alpha_0$, 
\begin{gather}
\left\langle \left [DT[\delta f, \delta g'], \Big(\mathrm{tr}\Big(-\frac{1}{r}+\ln g_0-\alpha_0\Big )^{-1}\Big)^{-1}\delta f \omega_0, \delta g'\right ]\right\rangle\nonumber \\
\geq \displaystyle \int_M \Bigg( \frac{\lambda}{2} \delta f^2
+ \mathrm{tr}\left((\delta g')^{\dag} \delta \ln(g'g_0)\right)+\delta f\frac{\mathrm{tr}\left(\Big(\frac{1}{r}-\ln g_0+\alpha_0\Big )^{-1}\delta\ln (g'g_0)\right)}{\mathrm{tr}\Big(\frac{1}{r}-\ln g_0+\alpha_0\Big )^{-1}}\nonumber \\
+\delta f \mathrm{tr}((\delta g')^{\dag} \ln(g_0))\Bigg ).
\label{ineq:afterGMHM}
\end{gather}
The proof of Lemma 2.1 in \cite{UY} shows that when $g'=I$,  $$C\mathrm{tr}\left((\delta g')^{\dag} \delta \ln(g'g_0)\right)\geq  \mathrm{tr}((\delta g')^{\dag} \delta g'), $$ for some $C$ depending only on $g_0$, and that $$\mathrm{tr}\left((\delta g')^{\dag} \delta \ln(g'g_0)\right)\geq  \mathrm{tr}((\delta g')^{\dag} \delta g')\geq \mathrm{tr}((\delta\ln (g' g_0))^2).$$
Thus, using Cauchy's inequality, we see that for sufficiently large $\lambda$, there exists a constant $C>0$ (depending only on $g_0$) such that
\begin{gather}
\left\langle \left [DT[\delta f, \delta g'], \Big(\mathrm{tr}\Big(-\frac{1}{r}+\ln g_0-\alpha_0\Big )^{-1}\Big)^{-1}\delta f \omega_0, \delta g'\right ]\right\rangle\geq \displaystyle \int_M \left(\delta f^2+\frac{1}{C}\Vert \delta g'\Vert^2 \right).
\end{gather}
Hence $Ker(DT)=Ker(DT^{*})=0$ and therefore, elliptic theory implies that $DT$ is an isomorphism. Hence, the infinite-dimensional implicit function theorem implies that $f_t, g_t$ exist (solving the equation) and are locally unique. \\
\indent In the direct sum case, exactly the same arguments go through for \ref{eq:directsummainsysteminfandg} word-to-word.
\end{proof}
We can prove a stronger uniqueness result.
\begin{proposition}
There exists a constant $b>r>0$ such that for every $\alpha_0, \lambda \geq b$,  \ref{eq:mainsysteminfandg} has a unique smooth solution at $t=0$ satisfying $\frac{\sqrt{-1}F}{\omega_0}+\alpha_0>0$.
\label{prop:Uniquenessat0}
\end{proposition}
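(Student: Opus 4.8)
The plan is a direct comparison argument that reduces the statement to a priori estimates. Let $(f_1,g_1)$ and $(f_2,g_2)$ be two smooth solutions of \ref{eq:mainsysteminfandg} at $t=0$ with $\frac{\sqrt{-1}F}{\omega_0}+\alpha_0>0$, and write $A_i:=\Delta f_i+\frac1r-e^{f_i}\ln g_i+\alpha_0>0$, so that $\det A_i=e^{\lambda f_i}a_0$ and, since $\mathrm{tr}(\ln g_i)=0$, $\mathrm{tr}(A_i)=r(\Delta f_i+\frac1r+\alpha_0)$. The first task is uniform $C^0$ bounds. The arithmetic--geometric mean inequality gives $e^{\lambda f_i}a_0=\det A_i\leq(\Delta f_i+\frac1r+\alpha_0)^r$, so at a maximum of $f_i$ one gets $\sup f_i\leq\frac1\lambda\log\big((\frac1r+\alpha_0)^r/a_0\big)$; since $a_0=\det(\frac1r+\alpha_0-\ln g_0)$ with $\mathrm{tr}(\ln g_0)=0$, this ratio tends to $1$ as $\alpha_0\to\infty$, so for $\alpha_0\geq b$ we have $\sup f_i\leq C\lambda^{-1}$ and $e^{f_i}\leq 2$, with $C,b$ depending only on $g_0$. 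For the lower bound I would use the second equation — which says that the trace-free part of the mean curvature of $g_ih_0$ is $-e^{f_i}\ln g_i$ — together with a standard elliptic estimate for the Hermitian--Einstein operator (cf.\ the proof of Lemma 2.1 of \cite{UY}) to bound $\|\ln g_i\|_{C^0}$ in terms of $e^{-\inf f_i}$; inserting this into the first equation at a minimum $p$ of $f_i$, the term $e^{f_i(p)}\ln g_i(p)$ is controlled by the same product $e^{\inf f_i}\|\ln g_i\|_{C^0}$ that has just appeared, hence for $\alpha_0\geq b$ every eigenvalue of $A_i(p)$ is at least $\alpha_0/2$, so $e^{\lambda\inf f_i}a_0=\det A_i(p)\geq(\alpha_0/2)^r$ and $\inf f_i\geq -C\lambda^{-1}$. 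Thus, uniformly in $\alpha_0,\lambda\geq b$, one obtains two-sided $C^0$ bounds on $f_i$ and on $\ln g_i$ depending only on $g_0$; one more use of the first equation then gives an upper bound on $\Delta f_i$ and $\lambda_{\min}(A_i)\geq c\alpha_0$, and Schauder estimates upgrade everything to $C^{2,\gamma}$.

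With these bounds the comparison runs as follows. Subtracting the two first equations and using concavity of $\log\det$ at $A_2$, at a point $p$ where $w:=f_1-f_2$ attains its maximum (so $\Delta w(p)\leq0$) one gets $\lambda w(p)\leq -\mathrm{tr}\big(A_2^{-1}(e^{f_1}\ln g_1-e^{f_2}\ln g_2)\big)(p)$. Splitting $e^{f_1}\ln g_1-e^{f_2}\ln g_2=e^{f_1}(\ln g_1-\ln g_2)+(e^{f_1}-e^{f_2})\ln g_2$: the second piece contributes $-(e^{f_1}-e^{f_2})(p)\,\mathrm{tr}(A_2^{-1}\ln g_2)(p)$, and from the identity $A_2+e^{f_2}\ln g_2=(\Delta f_2+\frac1r+\alpha_0)I$ and the inequality $\mathrm{tr}(A_2)\mathrm{tr}(A_2^{-1})\geq r^2$ one finds $\mathrm{tr}(A_2^{-1}\ln g_2)=e^{-f_2}\big((\Delta f_2+\frac1r+\alpha_0)\mathrm{tr}(A_2^{-1})-r\big)\geq0$, so this piece is $\leq0$; for the first piece, $\mathrm{tr}(\ln g_1-\ln g_2)=0$ allows one to subtract a scalar matrix from $A_2^{-1}$, reducing its contribution to at most $e^{f_1(p)}$ times the oscillation of the eigenvalues of $A_2^{-1}$ times $\|\ln g_1-\ln g_2\|_{C^0}$, which by the first step is $O(\alpha_0^{-2})\|\ln g_1-\ln g_2\|_{C^0}$. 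Hence $\lambda\sup|w|\leq C_1\alpha_0^{-2}\|\ln g_1-\ln g_2\|_{C^0}$. On the other side, pairing the difference of the second equations with $\ln g_1-\ln g_2$ and integrating by parts produces a nonnegative Dirichlet-type term on the left (on a Riemann surface; this is again the content of Lemma 2.1 of \cite{UY}), whence $\int_M\mathrm{tr}\big((\ln g_1-\ln g_2)(e^{f_1}\ln g_1-e^{f_2}\ln g_2)\big)\omega_0\leq0$; isolating $\int_M e^{f_1}|\ln g_1-\ln g_2|^2\omega_0$ and bounding the remainder by $\|e^{f_1}-e^{f_2}\|_{C^0}\|\ln g_2\|_{C^0}\|\ln g_1-\ln g_2\|_{L^1}$, together with $\tfrac12\leq e^{f_i}\leq 2$, gives $\|\ln g_1-\ln g_2\|_{L^2}\leq C_2\sup|w|$, which the second equation and elliptic regularity upgrade to $\|\ln g_1-\ln g_2\|_{C^0}\leq C_2'\sup|w|$.

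Combining the two estimates, $\lambda\sup|w|\leq C_1C_2'\alpha_0^{-2}\sup|w|$, so enlarging $b$ so that $b^3>C_1C_2'$ (and $b$ exceeds all earlier thresholds and $b>r$) forces $w\equiv0$, i.e.\ $f_1=f_2$, and then the second equation gives $g_1=g_2$; since $(0,g_0)$ is a solution at $t=0$ by the choice of $a_0$, it is the unique one. (Alternatively, the $C^{2,\gamma}$ bounds of the first step place every positively-curved solution inside the neighbourhood of $(0,g_0)$ on which Proposition \ref{prop:Opennessat0} already supplies local uniqueness.) The main obstacle is the a priori estimate in the first step: the $C^0$ bounds on $f_i$ and on $\ln g_i$ are genuinely coupled through the two equations, and the argument must be arranged so that the bound coming from the second equation is fed into the first precisely at the minimum of $f$, where the coupling term controls itself, and so that every constant — in particular the threshold $b$ — depends on $g_0$ alone and not on $\alpha_0$ or $\lambda$.
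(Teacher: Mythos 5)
Your proposal is correct in outline, and its first step (the a priori estimates) coincides with the paper's Lemma \ref{lem:aprioriestimatesforuniqueness}: the upper bound on $f$ from AM--GM at the maximum (this is exactly Remark \ref{rem:upperboundonf}), the $C^0$ bound $\Vert\ln g\Vert_{C^0}\leq C e^{-\inf f}$ from Lemma 2.4 of \cite{UY} at the maximum of $\vert u\vert$, and the lower bound on $f$ by feeding this back into the determinant equation at the minimum of $f$. Where you genuinely diverge is the uniqueness mechanism. The paper connects an arbitrary solution to the specific solution $(0,g_0)$ by the path $f_s=sf$, $g_s=e^{s\ln g+(1-s)\ln g_0}$, writes $0=T(f,g)-T(0,g_0)=\int_0^1 DT\,ds$, pairs with a weighted test pair and integrates by parts over $M$, obtaining a coercive quadratic form (a global version of the linearisation estimate from Proposition \ref{prop:Opennessat0}); largeness of $\lambda$ alone then kills the cross terms. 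You instead compare two arbitrary solutions, treat the first equation by a pointwise maximum principle using concavity of $\log\det$ at the extrema of $f_1-f_2$ (gaining an extra factor $O(\alpha_0^{-2})$ from the near-scalar structure of $A_2$ and the tracelessness of $\ln g_1-\ln g_2$), and treat the second equation by an integral monotonicity estimate. Your route avoids the path construction and the bookkeeping with $\cF_1,\cF_2,\cF_3,G_1,G_2$, and the smallness factor in $\alpha_0$ means the two equations decouple more transparently; the paper's route has the advantage that the positivity input from Lemma 2.1 of \cite{UY} is invoked in exactly the form in which it is stated there (as a bound on $\mathrm{tr}((\delta g')^{\dag}\delta\ln(g'g_0))$ along the path).

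One caveat: in the general (non--direct-sum) case your pairing of the difference of the second equations with $\ln g_1-\ln g_2$ needs justification, since $g_1$ and $g_2$ need not commute and the standard Donaldson/Uhlenbeck--Yau monotonicity is phrased for $\ln(g_2^{-1}g_1)$ or for an integral along a path; the claimed nonnegativity of the Dirichlet-type term is true but is not literally Lemma 2.1 of \cite{UY}, and your subsequent algebraic splitting of $e^{f_1}\ln g_1-e^{f_2}\ln g_2$ must be done compatibly with whichever endomorphism you actually pair with. In the direct sum case this is vacuous, and in general it is repairable (and is the same kind of issue the paper's path argument is designed to sidestep), but it is the one step of your plan that is not yet a proof.
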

\begin{proof}

We need to prove that $(f=0,g=g_0)$ is the only smooth solution at $t=0$ satisfying $\frac{\sqrt{-1}F}{\omega_0}+\alpha_0>0$. Suppose $(f,g)$ is another such solution. Thus 
\begin{gather}
\Delta f+\frac{1}{r}-e^f \ln g+\alpha_0>0, \nonumber \\
\frac{1}{r}-\ln g_0 +\alpha_0>0.
\label{eq:coneinuniqueness}
\end{gather}
 The two solutions can be connected by a path $f_s=sf, g_s=e^{s\ln(g)+(1-s)\ln(g_0)}$. Note that $\frac{df_s}{ds}=f$ and $\frac{dg_s}{ds}=Dexp_{s\ln(g)+(1-s)\ln(g_0)}(\ln(g)-\ln(g_0))$ where $D\exp_A (B)$ is the derivative map of the exponential function at the endomorphism $A$ acting (linearly) on the endomorphism $B$. We want to conclude that $\Delta f_s+\frac{1}{r}-e^{f_s}\ln(g_s)+\alpha_0$ is positive (and in fact, can be made arbitrarily large by increasing $\alpha_0, \lambda$). To this end, we need some a priori estimates.
\begin{lemma}
As $\alpha_0\rightarrow \infty$, $e^{\lambda f}\rightarrow 1$, $\vert ln g\vert\leq C$, and $\Delta f=o(\alpha)$.
\label{lem:aprioriestimatesforuniqueness}
\end{lemma}
\begin{proof}
The equations satisfied by $f,g$ are:
\begin{gather}
\det\left(\Delta f+\frac{1}{r}-e^f \ln (g)+\alpha_0\right ) = e^{\lambda f}\det\left ( \frac{1}{r}-\ln(g_0)+\alpha_0\right )\nonumber \\
\sqrt{-1}F^0_0+\sqrt{-1}\bar{\partial} (\partial g g^{-1})=-e^f \ln (g) \omega_0.
\label{eq:satisfiedbyfandguniqueness}
\end{gather}
At this point we recall a useful lemma (Lemma 2.4 in \cite{UY}) of Uhlenbeck and Yau (whose proof goes through verbatim in spite of $\epsilon=e^f$ not being a constant).
\begin{lemma}
Let $u=\ln(g)$ and $\vert u \vert^2=tr(u^2)$. Then 
\begin{gather}
e^f \vert u\vert^2-\frac{1}{2}\Delta \vert u \vert^2\leq \vert u \vert \left \vert \frac{\sqrt{-1}F_0^0}{\omega_0} \right \vert.
\label{eq:UYestimate}
\end{gather}
\label{lem:UYlemmaforestimate}
\end{lemma}
At the maximum of $\vert u\vert$, $e^{f_{min}}\vert u \vert_{max} \leq e^f \vert u \vert\leq  \left \vert \frac{\sqrt{-1}F_0^0}{\omega_0} \right \vert\leq C_1$. At the minimum of $f$, $\Delta f +\frac{1}{r}-e^{f_{min}} \ln(g) +\alpha_0 \geq \alpha_0+\frac{1}{r}-C_1>0$ if $\alpha_0$ is large enough. Thus, 
\begin{gather}
e^{\lambda f_{\min}} \geq \frac{\det(\alpha_0+\frac{1}{r}-C_1)}{\det(\alpha_0+\frac{1}{r}-\ln g_0)} \rightarrow 1 \ \mathrm{as} \ \alpha_0 \rightarrow \infty
\label{ineq:lowerboundonfforuniqueness}
\end{gather}
Therefore, $\vert u \vert \leq C_1\frac{\det(\alpha_0+\frac{1}{r}-\ln g_0)}{\det(\alpha_0+\frac{1}{r}-C_1)}\leq C_2$. Moreover, at the maximum of $f$, $\Delta f\leq 0$ and hence 
\begin{gather}
e^{\lambda f} a_0 \leq \det\left (\frac{1}{r}-e^{f}\ln(g)+\alpha_0 \right ) \nonumber \\
\Rightarrow e^{\lambda f_{\max}}\leq \frac{\det\left (\frac{1}{r}-e^{f_{max}}\ln(g)+\alpha_0 \right )}{\det\left (\frac{1}{r}-\ln(g_0)+\alpha_0 \right )} 
\label{ineq:upperboundonf}
\end{gather}
We claim that as $\alpha_0 \rightarrow \infty$, $e^{f_{max}}\leq C_3$ for some $C_3>0$. Indeed, if $e^{f_{max}} \rightarrow \infty$, then $\frac{e^{f_{max}} \vert ln(g) \vert}{\alpha_0}\rightarrow \infty$. But the left-hand-side goes to $\infty$ faster because $\lambda>r$ and $\vert \ln(g) \vert\leq C_2$. Hence we have a contradiction and $e^{f_{max}} \leq C_3$. Now returning to Inequality \ref{ineq:upperboundonf}, as $\alpha_0\rightarrow \infty$, the right-hand-side approaches $1$. Hence, 
\begin{gather}
1-O(\frac{1}{\alpha_0}) \leq e^{\lambda f} \leq 1+O(\frac{1}{\alpha_0})
\label{ineq:boundsonfbothsidesuniqueness}
\end{gather}
\begin{remark}
Note that by applying the Arithmetic-Mean-Geometric-Mean inequality and the assumption $\mathrm{tr}(\ln(g)) =0$ to the right-hand-side of Inequality \ref{ineq:upperboundonf}, we see that $e^{\lambda f}$ is bounded above (albeit with a bound depending on $\alpha_0$) even for time $t$ solutions (as opposed to $t=0$). This observation shall be useful later on.
\label{rem:upperboundonf}
\end{remark}
Now divide by $\alpha_0^r$ on both sides of the first equation in \ref{eq:satisfiedbyfandguniqueness}. As $\alpha_0\rightarrow \infty$, the right-hand-side approaches $1$ and any limit of the left-hand-side is $\det\left( 1+ \lim_{\alpha_0\rightarrow \infty} \frac{\Delta f}{\alpha_0}\right )$. Hence, 
\begin{gather}
\lim_{\alpha_0\rightarrow \infty} \frac{\vert \Delta f \vert}{\alpha_0} \rightarrow 0.
\label{eq:limitofDeltaf}
\end{gather}
\end{proof}
Therefore, 
\begin{gather}
\Delta f_s+\frac{1}{r}-e^{f_s}\ln(g_s)+\alpha_0=\alpha_0+o(\alpha).
\label{eq:uniquenesspathconeasymptotics}
\end{gather}
Using the same notation as in the proof of Proposition \ref{prop:Opennessat0}, 
\begin{gather}
0=T(f,g',0)-T(0,I,0)=\int_0^1 \frac{dT(f_s,g'_s=g_sg_0^{-1},0)}{ds}ds\nonumber\\
=\int_0^1 DT_{f,g'}(f_s,g'_s,0)\left[\frac{df_s}{ds},\frac{dg'_s}{ds}\right] ds\nonumber\\
= (\int_0^1\frac{dT_1}{ds}ds, \int_0^1\frac{dT_2}{ds}ds).
\label{eq:DifferenceofT}
\end{gather}
We compute the first component as follows.
\begin{gather}
\int_0^1\frac{dT_1}{ds}ds = \cF_1 \Delta f-\mathrm{tr}(\cF_2(g-g_0))-\cF_3 f-\lambda f,
\label{eq:firstcomponentofderivativeofTuniqueness}
\end{gather}
where
\begin{gather}
\cF_1=\int_0^1 \mathrm{tr}\left(\left(\Delta f_s+\frac{1}{r}-e^{f_s}\ln g_s +\alpha_0\right)^{-1} \right) ds \nonumber \\
\cF_2=\int_0^1  \left(\Delta f_s+\frac{1}{r}-e^{f_s}\ln g_s +\alpha_0\right)^{-1} e^{f_s} ds \nonumber \\
\cF_3=\int_0^1 e^{f_s}\mathrm{tr}\left(\left(\Delta f_s+\frac{1}{r}-e^{f_s}\ln g_s +\alpha_0\right)^{-1} \ln(g_s) \right) ds. 
\label{eq:defofF1F2etc}
\end{gather}
Now we compute the second component.
\begin{gather}
\frac{dT_2}{ds}=\int_0^1 \sqrt{-1}\bar{\partial}\left(\partial \left(\frac{dg_s}{ds}\right)g_s^{-1}-\partial (g_s) g_s^{-1}\frac{dg_s}{ds} g_s^{-1} \right) ds +G_1 f \omega_0+ G_2 (g-g_0) \omega_0, \nonumber\\
=\int_0^1 \sqrt{-1}\bar{\partial}\partial_s \left(\frac{dg_s}{ds}g_s^{-1}\right) ds +G_1 f \omega_0+ G_2 (g-g_0) \omega_0
\label{eq:secondcomponentofderivativeofTuniqueness}
\end{gather}
where 
\begin{gather}
G_1=\int_0^1 e^{f_s} \ln(g_s)ds \nonumber \\
G_2=\int_0^1 e^{f_s}ds. 
\label{eq:defofG1G2etc}
\end{gather}
Now we multiply Equation \ref{eq:DifferenceofT} and integrate-by-parts to obtain the following.
\begin{gather}
0=\langle \left(\int_0^1 \frac{dT_1}{ds}ds, \int_0^1 \frac{dT_2}{ds}ds \right), \left(-\cF_1^{-1} f, \int_0^1 \frac{dg_s}{ds}g_s^{-1}ds\right) \rangle \nonumber \\
\geq \displaystyle \int_M \Bigg (\cF_1^{-1}ftr(\cF_2(g-g_0))+\cF_1^{-1}\cF_3f^2+\cF_1^{-1}\lambda f^2+ftr\left(G_1 \int_0^1 \frac{dg_s}{ds}g_s^{-1}ds\right)\nonumber \\
+G_2tr\left(\int_0^1 \frac{dg_s}{ds}g_s^{-1} (g-g_0)ds\right)  \Bigg).
\label{ineq:multiplyintegratebyparts}
\end{gather}
For large $\alpha_0$, Lemma \ref{lem:aprioriestimatesforuniqueness} can be used to show easily that 
\begin{gather}
0\geq \displaystyle \int_M \Bigg (-C\vert f\vert \vert g-g_0\vert +\frac{\lambda}{2} f^2
+G_2tr\left(\int_0^1 \frac{dg_s}{ds}g_s^{-1} (g-g_0)ds\right)  \Bigg).
\label{ineq:uniquenessafterestimates}
\end{gather}
Using Lemma 2.1 in \cite{UY} and Lemma \ref{lem:aprioriestimatesforuniqueness} we see that (for a larger constant $C$)
\begin{gather}
0\geq \displaystyle \int_M \Bigg (-C\vert f\vert \vert g-g_0\vert +\frac{\lambda}{2} f^2
+\frac{1}{C}\vert g-g_0 \vert^2  \Bigg).
\label{ineq:finalinequalityforuniqueness}
\end{gather}
Therefore $f=0=g-g_0$ if $\lambda$ is sufficiently large.
\end{proof}
For the remainder of this paper, we fix $\lambda, \alpha_0 >>1$ so that Propositions \ref{prop:Opennessat0} and \ref{prop:Uniquenessat0} are applicable.
\section{A priori estimates}\label{sec:Aprioriestimates}
\indent We shall prove a priori estimates to $C^{2,\gamma}$ solutions of \ref{eq:directsummainsysteminfandg} in this section. However, not all that follows is restricted to the direct sum case. We denote constants independent of $t$ by $C$. In particular, unless specified otherwise $C$ may vary from line to line.\\
\indent By Remark \ref{rem:upperboundonf} we see that 
\begin{gather}
e^{\lambda f}\leq C. 
\label{ineq:upperboundonfforapriori}
\end{gather}
Recall the Green representation formula:
\begin{gather}
v(x)=\fint v\omega_0 + \int G(x,y) \Delta_y v \omega_0,
\label{eq:Green}
\end{gather}
where $C\ln(d(x,y)^2)\leq G(x,y) \leq 0$. Since $\Delta f \geq -C$, we see that 
\begin{gather}
f-\fint f\leq C.
\label{ineq:boundonfminusfint}
\end{gather} 
Note that if $f\geq -C$, using Lemma \ref{lem:UYlemmaforestimate} we see that $\vert u\vert \leq C$. Hence, $\vert \Delta f \vert\leq C$ and therefore $\Vert f\Vert_{C^{2,\gamma}}\leq C$. Multiplying both sides of the second equation in \ref{eq:mainsysteminfandg} by $g$, taking trace and integrating we get
\begin{gather}
\displaystyle \int_M (\vert \nabla g \vert^2 +\vert \partial g \sqrt{g}^{-1} \vert^2)\leq C \nonumber\\
\Rightarrow \Vert \nabla g \Vert_{L^2}\leq C.
\label{ineq:estimateonsobolevofginthebestcase}
\end{gather} 
Let $A=\del g g^{-1}$. Since $\vert \delb A \vert_{L^2}\leq C$ and $\Vert A \Vert_{L^2} \leq C$ by \ref{ineq:estimateonsobolevofginthebestcase}, by elliptic theory, $\Vert A\Vert_{L^p} \leq \Vert A \Vert_{W^{1,2}}\leq C$. Returning back to \ref{eq:mainsysteminfandg} we see that $\Vert g \Vert_{C^{1,\gamma}}\leq\Vert g \Vert_{W^{2,p}}\leq C$ and hence $\Vert g \Vert_{C^{2,\gamma}}\leq C$. While we cannot explicitly solve for $\Delta f$, we can do so implicitly. Let $\mathcal{A}$ be the submanifold of $\mathbb{R}\times GL(r,\mathbb{C})$ consisting of $(u,A)$ such that $\det(A)=1$, $A$ is Hermitian and positive-definite, and 
\begin{gather}
u+A>0.
\label{ineq:coneconditionforA}
\end{gather}
Define a smooth map $L(v,A): \mathcal{A}\rightarrow (0,\infty)$ as $$L(v,A)=\det(v+A).$$ It is then easy to prove the following lemma.
\begin{lemma}
For each fixed $A$, the map $L_{A}=L(,A)$ is a diffeomorphism. Moreover, $L_A^{-1}(\eta)$ depends smoothly on $(A,\eta)$.
\label{lem:implicitsolutionforDeltaf}
\end{lemma}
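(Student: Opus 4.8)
The plan is to prove the two assertions separately, both ultimately by the inverse and implicit function theorems, the only real computation being the strict monotonicity of $L_A$ in its first argument.

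First I would fix a Hermitian positive-definite $A$ with $\det(A)=1$ and diagonalize it: let $0<\lambda_1\le\cdots\le\lambda_r$ be its eigenvalues, so $\prod_i\lambda_i=1$. The cone condition \ref{ineq:coneconditionforA} for $(v,A)\in\mathcal{A}$ reads $vI+A>0$, i.e. $v>-\lambda_1$; hence the domain of $L_A$ is the open interval $I_A=(-\lambda_1,\infty)$. On $I_A$ we have $L_A(v)=\det(vI+A)=\prod_{i=1}^r(v+\lambda_i)$, a polynomial all of whose linear factors are strictly positive, and Jacobi's formula gives $L_A'(v)=\det(vI+A)\,\Tr\big((vI+A)^{-1}\big)=\big(\prod_i(v+\lambda_i)\big)\sum_j(v+\lambda_j)^{-1}>0$ on $I_A$. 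So $L_A$ is strictly increasing; since $L_A(v)\to 0^{+}$ as $v\to-\lambda_1^{+}$ (one factor vanishes while the rest stay positive) and $L_A(v)\to\infty$ as $v\to\infty$, it is a smooth bijection $I_A\to(0,\infty)$ with nowhere-vanishing derivative. The inverse function theorem then shows $L_A$ is a diffeomorphism.

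For the joint smoothness I would package everything into one map. Let $\mathcal{M}$ denote the (smooth) manifold of Hermitian positive-definite $r\times r$ matrices of determinant $1$, so that $\mathcal{A}=\{(v,A):A\in\mathcal{M},\ v>-\lambda_{\min}(A)\}$ is an open subset of $\R\times\mathcal{M}$, and define $\Phi\colon\mathcal{A}\to(0,\infty)\times\mathcal{M}$ by $\Phi(v,A)=(\det(vI+A),A)$. This is smooth, and by the previous paragraph it is a bijection onto $(0,\infty)\times\mathcal{M}$. Its differential is block lower-triangular, with diagonal blocks $\partial_v\det(vI+A)=\det(vI+A)\,\Tr\big((vI+A)^{-1}\big)\neq 0$ and the identity on $T_A\mathcal{M}$, hence invertible everywhere. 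Therefore $\Phi$ is a diffeomorphism, and $\Phi^{-1}(\eta,A)=(L_A^{-1}(\eta),A)$; reading off the first coordinate shows that $L_A^{-1}(\eta)$ is a smooth function of $(A,\eta)$, as claimed.

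There is essentially no obstacle here. The only steps needing a line of care are the computation of $L_A'$ via Jacobi's formula (equivalently, the elementary fact that $\prod_i(v+\lambda_i)$ is increasing when all the factors are positive) and the identification of the image of $\Phi$ with the full product $(0,\infty)\times\mathcal{M}$, which is exactly the surjectivity encoded in the limits $L_A(v)\to 0^{+}$ and $L_A(v)\to\infty$.
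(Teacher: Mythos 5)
Your proof is correct. The paper offers no proof at all (it simply states that the lemma is "easy to prove"), and your argument — strict monotonicity of $v\mapsto\prod_i(v+\lambda_i)$ on the cone $v>-\lambda_{\min}(A)$ together with the limits at the endpoints, then the inverse function theorem applied to $(v,A)\mapsto(\det(vI+A),A)$ for joint smoothness — is exactly the natural way to fill in that gap.
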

Thus the second equation is of the form 
\begin{gather}
\Delta f = L_{-e^f \ln(g)+\frac{1}{r}+\alpha_0(1-t)}^{-1} (e^{\lambda f}a_0).
\label{eq:solvingforDeltaf}
\end{gather} 
Now we can bootstrap to get smoothness and estimates of any order. \\
\indent We now reduce the estimate $f\geq -C$ to a another estimate. This proposition is restricted to the direct sum case \ref{eq:directsummainsysteminfandg} and uses the assumption that $L_i$ are ample.
\begin{proposition}
Assume that 
\begin{gather}
\Vert \ln(g) e^f \Vert_{C^0} \leq C.
\label{ineq:fundamentalestimate}
\end{gather}
Then there exists a $C$ independent of $t$ such that if  $(f,u_1,u_2,\ldots)$ is a $C^{2,\gamma}$ tuple of real-valued functions satisfying \ref{eq:directsummainsysteminfandg} and $\Delta f+\frac{1}{r}-e^f \ln(g)+(1-t)\alpha_0>0$, then $f\geq -C$.
\label{prop:Apriori}
\end{proposition}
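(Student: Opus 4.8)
The plan is to argue by contradiction: suppose there is a sequence $t_k\in[0,1]$ and $C^{2,\gamma}$ solutions $(f_k,u_{1,k},\ldots,u_{r,k})$ of \ref{eq:directsummainsysteminfandg} with $\min_M f_k\to-\infty$. The upper bound \ref{ineq:upperboundonfforapriori} and the hypothesis \ref{ineq:fundamentalestimate} are the two pieces of a priori control we are allowed to use; the strategy is to leverage ampleness of the $L_i$ together with \ref{ineq:fundamentalestimate} to show that $e^{f_k}\ln(g_k)\to 0$ in a strong enough sense that the third equation $\sqrt{-1}(F_0^0)_i+\sqrt{-1}\delb\del u_{i,k}=-e^{f_k}u_{i,k}\omega_0$ forces each $u_{i,k}$ to converge to a harmonic — hence constant, hence (by $\sum_i u_i=0$ and a degree computation) a fixed — limit, contradicting the fact that the $L_i$ cannot all have the same degree unless $E$ is already ``balanced,'' which ampleness does not guarantee. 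More precisely, I would first note that since $f_k$ is bounded above, the hypothesis $\Vert e^{f_k}\ln g_k\Vert_{C^0}\le C$ gives an $L^\infty$ bound on the right-hand side of the third equation; feeding this into the Green representation formula \ref{eq:Green} for $u_{i,k}$ (using $\int_M (F_0^0)_i = 2\pi(\deg L_i - \tfrac{1}{r}\deg E)$ and $\int_M u_{i,k}e^{f_k}\omega_0$ controlled) yields a uniform $C^{1,\gamma}$ bound on each $u_{i,k}$, independent of $t_k$.

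Next I would extract the contradiction at the point $x_k$ where $f_k$ attains its minimum. At $x_k$ one has $\Delta f_k(x_k)\ge 0$, so from the cone condition and the first equation of \ref{eq:directsummainsysteminfandg},
\begin{gather*}
e^{\lambda f_k(x_k)}a_{t_k}=\det\!\left(\Delta f_k+\tfrac{1}{r}-e^{f_k}\ln g_k+(1-t_k)\alpha_0\right)\!(x_k)\ge \det\!\left(\tfrac{1}{r}-e^{f_k}\ln g_k+(1-t_k)\alpha_0\right)\!(x_k),
\end{gather*}
and since the entries inside the last determinant are bounded below by a $t$-independent constant (using \ref{ineq:fundamentalestimate} and $\alpha_0$ large when $t$ is away from $1$, while a separate AM–GM argument as in Remark \ref{rem:upperboundonf} handles the $\Delta f$ term when $t$ is near $1$), the right-hand side is bounded below by a positive $t$-independent constant. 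As $f_k(x_k)=\min f_k\to-\infty$ the left-hand side tends to $0$, a contradiction. The role of ampleness and the direct-sum structure is precisely to make \ref{ineq:fundamentalestimate} the \emph{only} missing estimate: once $\Vert e^f\ln g\Vert_{C^0}$ is bounded, the determinant on the right is uniformly bounded away from $0$, which is exactly what fails in the general vector-bundle case (there is no analogue of this proposition, as the author notes).

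The main obstacle I expect is the case $t\to 1$, where $\alpha_0(1-t)\to 0$ and the cone condition degenerates: one must ensure that $\tfrac{1}{r}-e^{f}\ln g$ together with $\Delta f$ stays positive enough that $\det(\Delta f+\tfrac1r-e^f\ln g+(1-t)\alpha_0)$ does not collapse. Here the key input is that $\Delta f\ge -C$ globally (which follows from \ref{ineq:fundamentalestimate}, since the first equation reads $\Delta f = L^{-1}_{-e^f\ln g+1/r+\alpha_0(1-t)}(e^{\lambda f}a_0)$ and the argument of $L^{-1}$ is two-sided bounded once $e^f\le C$ and $\Vert e^f\ln g\Vert_{C^0}\le C$), hence at the minimum of $f$ the matrix $\Delta f+\tfrac1r-e^f\ln g\ge -C+\tfrac1r-C I$ and one combines this with the AM–GM trick of Remark \ref{rem:upperboundonf} applied to $\det(\tfrac1r-e^f\ln g+(1-t)\alpha_0)$, using $\sum_i u_i=0$, to keep the determinant uniformly positive. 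Modulo carefully tracking the $t$-dependence through these determinant estimates, the argument is then a matter of routine bootstrapping.
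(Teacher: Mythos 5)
Your reduction to a contradiction argument, and your observation that \ref{ineq:fundamentalestimate} yields a two-sided bound on $\Delta f$, are correct, but the two load-bearing steps of your proposal fail. First, you cannot conclude a uniform $C^{1,\gamma}$ bound on the $u_{i,k}$ themselves: the Green representation controls only $u_{i,k}-\fint u_{i,k}$, and in the contradiction scenario the averages $\fint u_{i,k}$ \emph{must} be allowed to blow up (like $e^{-\fint f_k}$); if the $u_{i,k}$ were uniformly bounded, then $u_{i,k}e^{f_k}\to 0$ uniformly and integrating the third equation of \ref{eq:directsummainsysteminfandg} would force $c_1(L_i)=c_1(E)/r$ for every $i$, which is absurd for a general ample direct sum. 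Second, the pointwise contradiction at the minimum of $f$ does not close: there the diagonal entries $\Delta f+\tfrac1r-e^{f}u_i+(1-t)\alpha_0$ are positive with sum $r\Delta f+1+r(1-t)\alpha_0\ge 1$, but $r$ positive numbers with a fixed sum can have arbitrarily small product; since the constant in $\Vert e^f\ln g\Vert_{C^0}\le C$ may well exceed $1/r$, one entry can legitimately approach $0$ when $t$ is near $1$, and the AM--GM/trace-free trick of Remark \ref{rem:upperboundonf} bounds the determinant from \emph{above}, not below. A symptom of the gap is that your final argument never actually invokes ampleness of the $L_i$, which has to enter somewhere.

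The paper's proof is instead global. One shows $f\to-\infty$ uniformly, extracts limits $f_\infty$ of $f-\fint f$ and $u_{i,\infty}$ of $u_i-\fint u_i$, and observes that $u_ie^{f}\to C_{i,\infty}e^{f_\infty}$ with $C_{i,\infty}=\lim\fint u_i\, e^{\fint f}$ possibly nonzero. Since the determinant equals $e^{\lambda f}a_t\to 0$, a factor must degenerate, yielding the limit equation $\Delta f_\infty=C_{i,\infty}e^{f_\infty}-\tfrac1r-(1-t)\alpha_0$ for the index maximizing $C_{i,\infty}$ (nonnegative because $\sum_i u_i=0$); integrating gives $\int C_{i,\infty}e^{f_\infty}\omega_0\ge c_1(E)/r$. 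On the other hand, integrating the third equation gives $\int u_ie^f\omega_0=-\int (F_0^0)^i=c_1(E)/r-c_1(L_i)<c_1(E)/r$, precisely because $L_i$ is ample. The contradiction is thus irreducibly an integral/topological one, not a maximum-principle one; to repair your proposal you would need to replace the minimum-point step by this limiting argument.
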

\begin{proof}
Suppose not. Then there exists a sequence of times $t_n$, points $p_n$, and solutions $(f_n,g_n)$ such that $f_n(p_n)=\min f_n \rightarrow -\infty$. We ignore the subscript $n$ for most of the remainder of the proof. \\
\indent Clearly, $\vert \Delta f \vert\leq C$. Hence $\Vert f-\fint f \Vert_{W^{2,p}}\leq C$ and $f\rightarrow -\infty$ uniformly. Denote by $f_{\infty}$ the weak zero-average $W^{2,p}$ limit (which is a strong $C^{1,\gamma}$ limit) of $f-\fint f$. Likewise, $\Vert u_i-\fint u_i \Vert_{W^{2,p}}\leq C$. We can bootstrap this estimate to get $\Vert u_i-\fint u_i \Vert_{C^{2,\gamma'}}\leq C$ and can assume that $u_i-\fint u_i \rightarrow u_{i,\infty}$ in $C^{2,\gamma}$. Moreover, $u_ie^f=(u_i-\fint u_i)e^{f}+\fint u_i e^{f-\fint f} e^{\fint f}$. The first term approaches zero and the second approaches $C_{i,\infty}e^{f_{\infty}}$ where $C_{i,\infty}=\lim \fint u_i e^{\fint f}$. Since $\det(\Delta f-e^f \ln(g) +\frac{1}{r}+(1-t)\alpha_0)\rightarrow 0$, we see that in the limit $\Delta f_{\infty}=C_{i,\infty}e^{f_{\infty}}-\frac{1}{r}-(1-t)\alpha_0$ weakly for the $i$ for which $C_{i,\infty}$ is maximum (and hence positive because $\sum_i u_i=0$). Thus $f_{\infty}$ is a constant and 
\begin{gather}
\int C_{i,\infty} e^{f_{\infty}}=\frac{c_1(E)}{r}+(1-t)\alpha_0 c_1(E) \geq \frac{c_1(E)}{r}.
\label{ineq:firstforcon}
\end{gather}
 However, 
\begin{gather}
\int u_i e^f \omega_0 = \int C_{i,\infty} e^{f_{\infty}}\omega_0=-\int (F^0_0)^i=\frac{c_1(E)}{r}-c_1(L_i)<\frac{c_1(E)}{r},
\label{eq:secondforcon}
\end{gather}
where the ampleness of $L_i$ was used. We have a contradiction from Equations \ref{ineq:firstforcon} and \ref{eq:secondforcon}.
\end{proof}
\indent We now prove the following proposition.
\begin{proposition}
Any solution $(u_i, f)$ of \ref{eq:directsummainsysteminfandg} satisfies $$\sum_i \Vert u_i e^f \Vert_{C^0}\leq C.$$
\label{prop:fundamentalestimateinthedirectsumcase}
\end{proposition}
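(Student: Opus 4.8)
The goal is an $L^\infty$ bound on $u_i e^f$ that is independent of $t$, which by Proposition \ref{prop:Apriori} closes the loop to $f \geq -C$. The natural starting point is the second equation of \ref{eq:directsummainsysteminfandg}, namely $\sqrt{-1}(F_0^0)_i + \sqrt{-1}\delb\del u_i = -e^f u_i \omega_0$, i.e.\ $\Delta u_i - e^f u_i = -(F_0^0)_i/\omega_0$, together with the scalar Uhlenbeck--Yau-type inequality of Lemma \ref{lem:UYlemmaforestimate}, which in the direct sum case decouples into $e^f u_i^2 - \tfrac12 \Delta(u_i^2) \leq |u_i|\,|(F_0^0)_i/\omega_0|$. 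The subtlety is that $|F_0^0|$ is itself bounded by a fixed constant (it depends only on $h_0$ and $\omega_0$), so one expects to extract a bound on $e^f|u_i|$ at interior maxima. First I would work at a point $x_i$ where $|u_i|$ achieves its maximum: there $\Delta(u_i^2) \leq 0$, so the inequality gives $e^{f(x_i)} |u_i|_{\max} \leq |(F_0^0)_i/\omega_0| \leq C$. This is a bound on $e^{f(x_i)}|u_i|_{\max}$ at the single point $x_i$, not a uniform bound on $e^f |u_i|$, so the remaining work is to propagate it.

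\textbf{Key steps.} The plan is: (1) record that $e^{\lambda f} \leq C$ from Remark \ref{rem:upperboundonf}/\ref{ineq:upperboundonfforapriori}, and that $\Delta f \geq -C$ from the first equation in \ref{eq:directsummainsysteminfandg} (the determinant is positive so each factor, hence the trace $\Delta f + \tfrac1r - e^f \ln g + (1-t)\alpha_0$, is $\geq$ something, and combined with the upper bound on $e^{\lambda f}$ one controls $\Delta f$ from below); (2) combine $\Delta f \geq -C$ with the Green representation \ref{eq:Green} to get $f - \fint f \leq C$, i.e.\ an a priori \emph{upper} oscillation bound on $f$; (3) run the maximum-principle argument above on each $u_i$ to get $e^{f(x_i)}|u_i|_{\max} \leq C$; (4) use the equation $\Delta u_i - e^f u_i = -(F_0^0)_i/\omega_0$ as an equation for $u_i$ with a coefficient $e^f \geq 0$ and bounded right-hand side, applying the maximum principle / Green's function once more — since $e^f \geq 0$, $\max u_i$ is controlled in terms of $\|(F_0^0)_i/\omega_0\|_{C^0}$ alone, giving $|u_i| \leq C$ \emph{wherever $e^f$ is bounded below}, but more usefully giving directly that $u_i$ satisfies an a priori $C^0$ bound of the form $|u_i| \leq C(1 + \sup e^{-f})$ is the wrong direction; instead I would multiply the equation by $u_i$, integrate, and use $\int e^f u_i^2 + \int |\nabla u_i|^2 = \int (F_0^0)_i u_i / \omega_0 \leq C\|u_i\|_{L^2}$ to bound $\int e^f u_i^2$ and $\|\nabla u_i\|_{L^2}$, hence $\|u_i - \fint u_i\|_{W^{1,2}} \leq C$, and bootstrap as in \ref{ineq:estimateonsobolevofginthebestcase}; (5) finally combine: since $u_i - \fint u_i$ is bounded in $C^{1,\gamma}$ and $e^{f - \fint f} \leq C$, write $u_i e^f = (u_i - \fint u_i) e^f + \fint u_i \cdot e^{f}$ and bound each piece using the pointwise bound from step (3) to pin down $|\fint u_i| e^{\fint f}$, exactly as in the compactness argument in the proof of Proposition \ref{prop:Apriori}.

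\textbf{Main obstacle.} The crux is step (5): controlling the ``constant part'' $\fint u_i e^f$. The maximum-principle bound from step (3) only says $e^{f(x_i)} |u_i(x_i)| \leq C$ at the maximum point of $|u_i|$, and at that point $|u_i(x_i)| = |u_i|_{\max} \geq |\fint u_i|$; but $f(x_i)$ could a priori be very negative, so this does not immediately bound $e^{\fint f} |\fint u_i|$. The resolution should be to use the oscillation bound $f - \fint f \leq C$ from step (2) together with the decoupled structure: at the point $p$ where $f$ is \emph{minimized}, the first equation forces $\Delta f(p) + \tfrac1r - e^{f(p)}\ln g(p) + (1-t)\alpha_0 > 0$ with $\Delta f(p) \leq 0$ — wait, $\Delta f(p) \geq 0$ at a minimum — so $\tfrac1r + (1-t)\alpha_0 - e^{f(p)} u_i(p) > -\Delta f(p)$ is automatically bounded, and since the factors of the determinant multiply to $e^{\lambda f} a_0$, if some factor were huge another would be tiny; combined with the $C^{1,\gamma}$ bound on $u_i - \fint u_i$ this should force $e^{f}|u_i|$ to stay bounded near $p$, and then the Harnack-type control on $e^f$ (via $f - \fint f$ bounded above and $e^{\lambda f}$ bounded above, so $f$ is bounded above) upgrades this to a global bound. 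I expect the technical heart of the paper's actual proof to be precisely this interplay between the determinant equation pinning $e^f$ from \emph{below} in an averaged sense and the Uhlenbeck--Yau inequality controlling $e^f u_i$ from \emph{above} pointwise, and I would structure the argument to play these two facts against each other, most likely by a contradiction/blow-up argument parallel to Proposition \ref{prop:Apriori} rather than a direct estimate.
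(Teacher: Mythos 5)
Your steps (1)--(3) are sound and coincide with the paper's preliminaries (the bounds $e^{\lambda f}\leq C$, $\Delta f\geq -C$, $f-\fint f\leq C$, and the Uhlenbeck--Yau maximum-principle bound at the maximum of $\vert u_i\vert$), and you have correctly located the crux: controlling the mean part $\fint u_i\cdot e^{f}$. But the two steps that are supposed to resolve it do not close. In step (4) the energy identity gives $\int\vert\nabla u_i\vert^2+\int e^{f}u_i^2\leq C\Vert u_i\Vert_{L^2}$, and $\Vert u_i\Vert_{L^2}$ contains the as-yet-unbounded mean, so you only obtain $\Vert u_i-\fint u_i\Vert_{W^{1,2}}\leq C(1+\vert\fint u_i\vert^{1/2})$, not $\leq C$; the argument is circular. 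In the ``main obstacle'' paragraph you propose to work at the minimum of $f$, but there $\Delta f\geq 0$, and positivity of the factors only yields $e^{f(p)}u_i(p)<\Delta f(p)+\frac{1}{r}+(1-t)\alpha_0$ with no upper bound on $\Delta f(p)$ available at this stage, so nothing is forced to be bounded at that point.

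The missing idea is a two-sided pointwise bound on $\Delta f$ in terms of the very quantity being estimated. Since every factor of the determinant is positive, since $\sum_j u_j=0$ implies $\max_j u_j\geq\frac{1}{2r}\sum_j\vert u_j\vert$, and since the product equals $e^{\lambda f}a_0\leq C$, one gets $\frac{1}{C}\sum_j\vert u_j\vert e^{f}-C\leq\Delta f\leq C\sum_j\vert u_j\vert e^{f}+C$. This is used twice. First, combined with $\Delta u_i=-e^{f}u_i-(F_0^0)_i/\omega_0$ it gives $\Delta(u_i+Cf)\geq -C$ and $\Delta(u_i-Cf)\leq C$, so the Green representation yields the comparison $\vert u_i-\fint u_i\vert\leq C\vert f-\fint f\vert+C$ --- weaker than the constant oscillation bound you aim for, but sufficient, because $\vert f-\fint f\vert e^{f}=\vert f-\fint f\vert e^{f-\fint f}e^{\fint f}\leq C$ using $f-\fint f\leq C$ and $e^{\fint f}\leq C$. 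Second, at the \emph{maximum} of $f$ (not the maximum of $\vert u_i\vert$, nor the minimum of $f$) one has $\Delta f\leq 0$, hence $\sum_j\vert u_j\vert e^{f_{\max}}\leq C$ at that point; since $0\leq f_{\max}-\fint f\leq C$, the comparison then gives $\vert\fint u_i\vert\leq Ce^{-f_{\max}}+C$, and so $\vert\fint u_i\vert e^{f}\leq Ce^{f-f_{\max}}+Ce^{f}\leq C$. Your decomposition in step (5) then closes. Without the lower half of the two-sided bound --- which you never derive; you only extract $\Delta f\geq -C$ from the determinant equation --- the evaluation point $x_i$ from your step (3) cannot be related to $\fint f$ in the useful direction, exactly as you feared.
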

\begin{proof}
Recall that $C\sum_i \vert u_i \vert e^f +C \geq \Delta f \geq \sum_i \frac{\vert u_i \vert e^f}{C} -C$. Hence, for all $i$
\begin{gather}
\Delta (u_i+C f)\geq -C,
\label{eq:plusforeachi}
\end{gather}
and 
\begin{gather}
\Delta(u_i-C f)\leq C.
\label{eq:minusforeachi}
\end{gather}
Applying \ref{eq:Green} to \ref{eq:plusforeachi} and \ref{eq:minusforeachi} we get 
\begin{gather}
u_i+Cf \leq \fint (u_i+Cf) +C \nonumber \\
u_i-Cf \geq \fint (u_i-Cf) -C \nonumber \\
\Rightarrow \vert u_i-\fint u_i \vert \leq C\vert f -\fint f \vert+C.
\label{ineq:estimateonui}
\end{gather}
Moreover, at the maximum of $f$, $\vert u_i \vert e^{f_{max}}\leq C$. Thus $\vert \fint u_i \vert \leq C\vert f-\fint f \vert+C+Ce^{-f_{max}}$. Therefore, $$\vert u_i \vert e^f \leq \vert u_i-\fint u_i \vert e^f +\vert \fint u_i \vert e^f \leq Ce^f +C\vert f-\fint f \vert e^f + Ce^{f-f_{max}}\leq C,$$ using \ref{ineq:upperboundonfforapriori} and \ref{ineq:boundonfminusfint}.
\end{proof}
\indent Combining Propositions \ref{prop:Apriori} and \ref{prop:fundamentalestimateinthedirectsumcase} with the arguments made earlier, we see that $f,g$ are smooth and satisfy a priori estimates of all orders in the direct sum case.
\begin{remark}
Naively, one might think that since $L_i$ admit positively curved metrics, all the a priori estimates must be trivial. However, to get a lower bound on $f$, if one attempts to use the maximum principle, one would need $\frac{1}{r}+(1-t)\alpha_0-e^f u_i >\frac{1}{C}$. Unfortunately, Proposition \ref{prop:fundamentalestimateinthedirectsumcase} is not effective and hence we are stymied. 
\label{rem:nontrivialityevenassumingpositivity}
\end{remark}
\section{Conclusion of the proof}\label{sec:Conclusion}
\indent We use Leray-Schauder degree theory to complete the proof of Theorems \ref{thm:main1} and \ref{thm:main2}. Let $\mathcal{B}$ be the bounded Banach submanifold of $C^{2,\gamma}\times C^{2,\gamma}\times [0,1]$ consisting of $(f,g,t)$ such that $\det(g)=1$, $g$ is $h_0$-Hermitian and positive-definite, $\Delta f+\frac{1}{r}-e^f \ln(g)+(1-t)\alpha_0>0$, and $\Vert f \Vert_{C^{2,\gamma}}+\Vert \ln(g) \Vert_{C^{2,\gamma}}< 2C$ and $\Delta f-e^f \ln(g)+\frac{1}{r}+\alpha_0(1-t)>\frac{1}{2C}$ where $C$ is the a priori estimate for $\Vert f \Vert_{C^{2,\gamma}}+\Vert \ln(g)\Vert_{C^{2,\gamma}}$ and $\frac{1}{C}$ is the lower bound for $\Delta f-e^f \ln(g)+\frac{1}{r}+\alpha_0(1-t)$ from section \ref{sec:Aprioriestimates}. Given $(f,g,t) \in \overline{\mathcal{B}}$ define the $C^{2,\gamma}$ pair $U,V$ (with $\det(V)=1$) by 
\begin{gather}
\Delta U-e^f \ln(g)+\frac{1}{r}+\alpha_0(1-t)>0,\nonumber \\
\Delta U =L^{-1}_{-e^f \ln(g)+\frac{1}{r}+\alpha_0(1-t)}(e^{\lambda U}a_0),
\label{eq:defofuforLS}
\end{gather}
and
\begin{gather}
F^0_0+\delb (\del V V^{-1})=-e^f\ln(V)\omega_0,
\label{eq:defofvforLS}
\end{gather}
in the vector bundle case. In the direct sum case, exactly the same definitions are used except that $V$ is assumed to be a diagonal matrix of functions. \\
\indent These definitions make sense. Indeed, 
\begin{enumerate}
\item Equation \ref{eq:defofuforLS}: Consider the continuity path (in $0\leq s\leq 1$) $$\Delta U=(1-s)(U-f+\Delta f)+sL^{-1}_{-e^f \ln(g)+\frac{1}{r}+\alpha_0(1-t)}(e^{\lambda U}a_0).$$ At $s=0$, $U=f$ is the unique solution and satisfies the first condition in \ref{eq:defofuforLS}. Since $L^{-1}$ is increasing and smooth, the linearisation is clearly invertible. Hence, openness holds. If $t_i\rightarrow t$ and $U_i\rightarrow U$ in $C^{2,\gamma}$, then the first condition in \ref{eq:defofuforLS} is clearly met. Therefore, we simply need to exhibit a priori estimates for $U$. By the maximum principle, $U\leq C$. Thus $\vert \Delta U\vert \leq C$ and hence $\Vert U-\fint U\Vert_{W^{2,p}}\leq C$. Suppose $U$ converges (uniformly) to $-\infty$ and $U-\fint U$ converges weakly in $W^{2,p}$ to $U_{\infty}$. Then since $L^{-1}$ is still continuous even when the first condition in \ref{eq:defofuforLS} degenerates, $\Delta U_{\infty}=e^f \lambda_{max}(\ln(g))-\frac{1}{r}-\alpha_0(1-t)$. Comparing this to the lower bound on $\Delta f$, we arrive at a contradiction. Hence $U$ is bounded in $W^{2,p}$. Using elliptic theory, it is easily seen to be bounded in $C^{4,\gamma}$. Therefore we have a $C^{4,\gamma}$ solution even at $s=1$ (which is a priori bounded in $C^{4,\gamma}$ for all $(f,g,t) \in \mathcal{B}$).
\item Equation \ref{eq:defofvforLS}: In the case of a direct sum, the systems is trivial to solve and has a unique $C^{4,\gamma}$ that is a priori bounded independent of $f,g,t \in \mathcal{B}$.\\
\indent In the general case, uniqueness was already proven in \cite{PinDem}. While the existence result in \cite{UY} works when $\epsilon=e^f$ is a \emph{constant}, we need to prove that solutions exist even when $\epsilon$ is not a constant.  Consider the continuity path $$F^0_0+\delb (\del V V^{-1})=-e^{sf}\ln(V)\omega_0.$$
At $s=0$ a solution exists thanks to \cite{UY}. Lemma \ref{lem:UYlemmaforestimate} shows that $\Vert \ln(V) \Vert_{C^0}\leq C$. The arguments prior to Lemma \ref{lem:aprioriestimatesforuniqueness} in Section \ref{sec:Aprioriestimates} show that $V$ is controlled in $C^{4,\gamma}$. Hence we are done.
\end{enumerate}
Define the Leray-Schauder map $F(f,g,t)=(f,g)-(U,V)$. Since $U,V$ are controlled in $C^{4,\gamma}$, $(f,g,t)\rightarrow (U,V)$ is a compact map. Moreover, $F^{-1}(0) \cap \partial \mathcal{B}=\phi$ because $\Vert f \Vert+\Vert \ln(g) \Vert\leq C$, $\Delta f-e^f \ln(g)+\frac{1}{r}+(1-t)\alpha_0\geq \frac{1}{C}$ on $F^{-1}(0)$. Thus the Leray-Schauder degree $deg(\mathcal{B},F(,t),0)$ is well-defined and independent of $t$. At $t=0$ the results of Section \ref{sec:Opennessatt=0} show that the degree is $\pm 1$. Thus $F(,t)^{-1}(0)\cap \mathcal{B} \neq \phi$ for all $t$. We are done.\qed

\end{document}